\newtheorem{thm}{Theorem}
\newtheorem{lem}[thm]{Lemma}
\newtheorem{cor}[thm]{Corollary}
\newtheorem{prob}[thm]{Problem}
\date{}
\begin{document}

\title{Total coloring of pseudo-outerplanar graphs\footnotetext{Email addresses: sdu.zhang@yahoo.com.cn, gzliu@sdu.edu.cn.}\thanks{Supported in part by NSFC(10971121, 11101243, 61070230), RFDP(20100131120017) and GIIFSDU(yzc10040).}}
\author{Xin Zhang,~Guizhen Liu\thanks {Corresponding author.}\\
{\small School of Mathematics, Shandong University, Jinan 250100, China}}
\date{}

\maketitle

\begin{abstract}
A graph is pseudo-outerplanar if each of its blocks has an embedding in the plane so that the vertices lie on a fixed circle and the edges lie inside the disk of this circle with each of them crossing at most one another. In this paper, the total coloring conjecture is completely confirmed for pseudo-outerplanar graphs. In particular, it is proved that the total chromatic number of every pseudo-outerplanar graph with maximum degree $\Delta\geq 5$ is $\Delta+1$.\\[.2em]
Keywords: pseudo-outerplanar graph, total coloring, maximum degree.
\end{abstract}

\section{Introduction}

A total coloring of a graph $G$ is an assignment of colors to the vertices and edges of $G$ such that every pair of adjacent/incident elements receive different colors. A $k$-total coloring of a graph $G$ is a total coloring of $G$ from a set of $k$ colors. The minimum positive integer $k$ for which $G$ has a $k$-total coloring, denoted by $\chi''(G)$, is called the total chromatic number of $G$. It is easy to see that $\chi''(G)\geq \Delta(G)+1$ for any graph $G$ by looking at the color of a vertex with maximum degree and its incident edges. The next step is to look for a Brooks-typed or Vizing-typed upper bound on the total chromatic number in terms of maximum degree. It turns out that the total coloring version of maximum degree upper bound is a difficult problem and has eluded mathematicians for nearly 50 years. The most well-known speculation is the total coloring conjecture, independently raised by Behzad \cite{Behzad.1965} and Vizing \cite{Vizing.1968}, which asserts that every graph of maximum degree $\Delta$ admits a $(\Delta+2)$-total coloring. This conjecture remains open, however, many beautiful results concerning it have been obtained (cf.~\cite{yap}). In particular, the total chromatic number of all outerplanar graphs has been determined completely by Zhang et al.\ \cite{ZZF} and that of all series-parallel graphs \cite{Wu} has been determined completely by Wu and Hu \cite{ZZF}.

A graph is pseudo-outerplanar if each of its blocks has an embedding in the plane so that the vertices lie on a fixed circle and the edges lie inside the disk of this circle with each of them crossing at most one another. For example, $K_{2,3}$ and $K_4$ are both pseudo-outerplanar graphs. This notion was introduced by Zhang, Liu and Wu in \cite{ZPOPG}, where the edge-decomposition of pseudo-outerplanar graphs into forests with a specified property was studied. In this paper, we prove that the total chromatic number of every pseudo-outerplanar graph with maximum degree $\Delta\geq 5$ is exactly $\Delta+1$ and thus the total coloring conjecture holds for all pseudo-outerplanar graphs.

\section{Main results and their proofs}

To begin with, let us review an useful structural property of pseudo-outerplanar graphs which was proved in \cite{ZPOPG}.

\begin{lem}\label{struc}
Let $G$ be a pseudo-outerplanar graph with minimum degree at least two. Then

\noindent $(1)$ $G$ has an edge $uv$ such that $d(u)=2$ and $d(v)\leq 4$, or

\noindent$(2)$ $G$ has a $4$-cycle $uxvy$ such that $d(u)=d(v)=2$, or

\noindent$(3)$ $G$ has a $4$-cycle $uxvy$ such that $d(u)=d(v)=3$ and $uv\in E(G)$, or

\noindent$(4)$ $G$ has a $7$-path $x'uxvywy'$ such that $d(u)=d(v)=d(w)=2$, $d(x)=d(y)=5$ and $xx', yy', xy, x'y, xy'\in E(G)$.
\end{lem}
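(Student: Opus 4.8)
The plan is to argue by contradiction by means of a discharging argument carried out on a planarization of $G$. Suppose $G$ is a connected pseudo-outerplanar graph with $\delta(G)\ge 2$ in which none of the configurations $(1)$--$(4)$ occurs, and fix a pseudo-outerplanar drawing of each block. I first pass to the graph $\gx$ obtained from this drawing by replacing each crossing point with a new vertex of degree $4$; since the crossings are thereby removed, $\gx$ is a connected plane graph whose bounded faces all have length at least $3$, and every original vertex $v$ keeps its degree, $d_{\gx}(v)=d_G(v)$. The advantage of working in $\gx$ rather than block by block is that no cut-vertex degrees are distorted, so any instance of $(1)$--$(4)$ detected in $\gx$ is genuinely present in $G$.

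Next I would assign to each vertex $x$ of $\gx$ the charge $d_{\gx}(x)-4$ and to each face $f$ the charge $d(f)-4$, where $d(f)$ is the length of $f$. By Euler's formula the total charge is exactly $-8$. Because the crossing vertices are precisely the $4$-vertices created in the planarization, they carry charge $0$, so the only elements with negative charge are original vertices of degree $2$ (charge $-2$) or $3$ (charge $-1$) and the triangular faces (charge $-1$). Everything then reduces to designing local redistribution rules so that, under the assumption that none of $(1)$--$(4)$ holds, every vertex and every face finishes with nonnegative charge; this contradicts the total being $-8$ and proves the lemma.

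The three excluded configurations $(1)$--$(3)$ are exactly what is needed to control the neighbourhoods of the light elements. The failure of $(1)$ forces both neighbours of every degree-$2$ vertex to have degree at least $5$, the failure of $(2)$ prevents two degree-$2$ vertices from sharing two neighbours, and the failure of $(3)$ restricts the short cycles through degree-$3$ vertices. The rules I would use send charge to each degree-$2$ or degree-$3$ vertex both from its high-degree neighbours and from the large faces incident to it, and send charge to each triangular face from its non-crossing corners and from neighbouring long faces; the amounts must be tuned so that a vertex of degree at least $5$, which has surplus only $d_G(v)-4$, is never asked to fund more light neighbours than it can afford, the remaining deficit of the light vertices being made up by the incident faces.

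The \emph{main obstacle} is the interaction with the crossing vertices. Because a crossing vertex carries charge $0$ and cannot donate, a triangle of $\gx$ may have a crossing vertex as a corner, and a degree-$2$ vertex may be walled off from its potential donors by crossings, so that two vertices of degree $5$ are forced to support three degree-$2$ vertices at once through a single crossing. This is precisely the tight pattern isolated by configuration $(4)$: the $7$-path $x'uxvywy'$ with $d(u)=d(v)=d(w)=2$, $d(x)=d(y)=5$ and the prescribed edges $xx',yy',xy,x'y,xy'$ describes the unique way the charge cannot be balanced, since the degree-$5$ vertices $x$ and $y$ each sit next to two of the degree-$2$ vertices and the surplus available to $u$, $v$, $w$, once the crossing is taken into account, falls short. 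Once $(4)$ is excluded as well, I expect the donations to balance and every charge to end nonnegative, yielding the desired contradiction and completing the proof.
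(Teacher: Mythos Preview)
The paper does not actually prove this lemma: it quotes it from \cite{ZPOPG}, remarking only that the present statement is ``a straightforward simplification of the corresponding theorem in \cite{ZPOPG} (which had more subcases).'' So there is no in-paper proof to compare against; your discharging outline is your own contribution rather than a reconstruction of something the authors wrote here.

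As an outline, your approach is the natural one: structural lemmas of this type for outerplanar and $1$-planar classes are virtually always proved by discharging on a planarization, and your charge assignment $d(\cdot)-4$ with total $-8$ is standard. That said, what you have written is a plan, not a proof. The substantive work---choosing the exact amounts sent from high-degree vertices and from faces, and then verifying case by case that every vertex and face ends nonnegative---is entirely deferred (``the amounts must be tuned'', ``I expect the donations to balance''). Your narrative that configuration $(4)$ is ``the unique way the charge cannot be balanced'' is an assertion, not something you have established; until the rules are written down and the local neighbourhoods are exhaustively checked (including all ways a crossing vertex can sit among triangles and $2$-vertices), this is not a proof.

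One concrete point to watch: in a pseudo-outerplanar block every original vertex lies on the boundary of the outer face of $\gx$, and in such settings the outer face is the dominant source of positive charge. Your sketch mentions ``large faces'' generically but does not single out the outer face; any workable rule set will almost certainly route most of the charge to the $2$- and $3$-vertices from the outer face rather than from their high-degree neighbours, and the analysis of when a $2$-vertex fails to see enough of the outer face (because a crossing subdivides its incident boundary edges) is exactly where configuration $(4)$ should emerge. Making this explicit is the missing core of the argument.
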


Here it should be remarked that Lemma \ref{struc} is a straightforward simplification of the corresponding theorem in \cite{ZPOPG} (which had more subcases).

\begin{thm}
Let $G$ be a pseudo-outerplanar graph with maximum degree $\Delta$ and let $M$ be an integer such that $\Delta\leq M$. If $M\geq 5$, then $\chi''(G)\leq M+1$.
\end{thm}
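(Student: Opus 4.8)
The plan is to fix the parameter $M\ge 5$ and argue by contradiction through a minimal counterexample, reducing the theorem to a reducibility analysis of the four unavoidable configurations of Lemma~\ref{struc}. Let $G$ be a counterexample minimizing $|V(G)|+|E(G)|$, so that $\chi''(G)>M+1$ while every proper subgraph $H$ of $G$ admits an $(M+1)$-total coloring; this is legitimate because $H$ is again pseudo-outerplanar with $\Delta(H)\le\Delta(G)\le M$ and $M\ge5$, which is exactly why the statement is phrased with a fixed $M$ rather than with $\Delta+1$ colors (the palette must stay fixed while $\Delta$ may drop in subgraphs). The first reduction is to force $\delta(G)\ge2$: a vertex of degree $0$ is irrelevant, and if $u$ has a unique neighbour $w$, I delete $u$, color $G-u$ by minimality, and extend by coloring the edge $uw$ (it meets at most $1+(d(w)-1)\le M$ forbidden colours) and then $u$ (at most two forbidden colours). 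Hence I may assume $\delta(G)\ge2$ and invoke Lemma~\ref{struc}; the whole proof then amounts to showing each of (1)--(4) cannot occur in $G$.

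Configurations (1) and (2) are routine. In (1), with $N(u)=\{v,w\}$ and $d(v)\le4$, I delete the $2$-vertex $u$, color $G-u$, and extend in the order $uw,uv,u$: the edge $uw$ sees at most $M$ colours, the edge $uv$ sees at most $1+3+1=5\le M$ colours, and finally $u$ sees only its four neighbours and incident edges, so every step leaves a free colour. In (2), I delete both $2$-vertices $u,v$ of the $4$-cycle $uxvy$, color $G-\{u,v\}$, and observe that the four new edges $ux,vx,uy,vy$ form a $4$-cycle in the conflict graph, with lists $[M+1]\setminus F_x$ or $[M+1]\setminus F_y$, where $F_x,F_y$ collect the colours already present at $x,y$ and satisfy $|F_x|,|F_y|\le M-1$. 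Since an even cycle is $2$-choosable and these lists have size at least $2$, the four edges can be colored, after which $u$ and $v$ each see at most four elements and are colored trivially.

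Configuration (3) is where the argument becomes delicate. Deleting the adjacent $3$-vertices $u,v$ of the chorded $4$-cycle $uxvy$ and coloring $G-\{u,v\}$, I must color the five edges $ux,uy,vx,vy,uv$ and the vertices $u,v$. The five edges form a wheel $W_4$ in the conflict graph, with hub $uv$ adjacent to the rim $4$-cycle $ux,uy,vy,vx$. My plan is to color the hub $uv$ first, choosing its colour inside $F_x\cap F_y$ when both $x,y$ have the extremal degree $M$ (and inside the single tight side otherwise); the bound $|F_x\cap F_y|\ge(M-1)+(M-1)-(M+1)=M-3\ge2$ for $M\ge5$ makes this possible and guarantees each rim edge then keeps a list of size at least $2$, so the rim is colored by $2$-choosability again. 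The genuinely subtle point is the two $3$-vertices: each sees six elements (three neighbours, three incident edges), and when $M=5$ there are exactly $M+1=6$ colours, so a purely numerical bound leaves no free colour. Overcoming this requires ordering the colouring so that a coincidence among those six colours is forced --- for example colouring $u,v$ before the rim edges and then selecting the rim edges to reuse $c(u)$ or $c(v)$ at $x,y$ --- and this is the real obstacle in this case.

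Configuration (4) --- the $7$-path $x'uxvywy'$ with three $2$-vertices, two $5$-vertices $x,y$, and the prescribed edges $xx',yy',xy,x'y,xy'$ --- is the most technical. Here I delete the three $2$-vertices $u,v,w$, color the remainder, and reinsert the six incident edges together with $u,v,w$; because $d(x)=d(y)=5$ is precisely the extremal degree when $M=5$, the palettes at $x$ and $y$ are as tight as possible, and the many prescribed edges among $x,y,x',y'$ must be used to manufacture the colour coincidences that make the extension go through. I expect this case, and the boundary value $M=5$ throughout (3) and (4), to be the main obstacle: the degree count is exactly critical there, so the proof must proceed by a careful choice of colouring order and, where necessary, local recolouring (Kempe-type exchanges at $x$ or $y$) to produce the repeated colour that a naive count is missing. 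Once all four configurations are shown to be reducible they contradict Lemma~\ref{struc}, so no minimal counterexample exists and the theorem follows.
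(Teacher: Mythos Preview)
Your overall framework---minimal counterexample with respect to $|V(G)|+|E(G)|$, reduction to $\delta(G)\ge2$, and reducibility of each configuration of Lemma~\ref{struc}---is exactly the paper's, and your treatments of configurations (1) and (2) are correct and essentially identical to the paper's (the paper phrases (1) as ``$d(u)=2$ forces $d(v)\ge M$'', but the content is the same, and for (2) it also invokes $2$-edge-choosability of the $4$-cycle).

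The genuine gap is in configuration (3), and it is self-inflicted: by deleting both $3$-vertices $u,v$ you must colour five edges and two vertices, and you correctly see that each of $u,v$ then faces six forbidden elements against a palette of six when $M=5$. The paper avoids this entirely by deleting only the \emph{edge} $uv$. In $G'=G-uv$ both $u$ and $v$ have degree~$2$, so by induction $G'$ is already $(M+1)$-totally coloured, and the only task is to colour the single edge $uv$. That edge is incident with six coloured elements ($u,v,ux,uy,vx,vy$), so the sole obstruction is $M=5$ with all six colours distinct; the paper then disposes of this in three lines by recolouring $u$ (or $v$) with one of the rim-edge colours, using that $c(x),c(y)$ are constrained by the incident rim edges. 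Your plan of choosing $c(uv)\in F_x\cap F_y$ and forcing coincidences at $u,v$ might be salvageable, but it is strictly harder than what is needed.

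For configuration (4) your reduction---delete $u,v,w$ and reinsert the six pendant edges---is the paper's, but you leave the extension as an ``expected obstacle'' to be handled by Kempe-type exchanges. In fact no Kempe chains are used: the paper records that (in the tight case) $|A(ux')|=|A(wy')|=1$ and $|A(ux)|=|A(vx)|=|A(vy)|=|A(wy)|=2$ with $A(ux)=A(vx)$, $A(vy)=A(wy)$, splits on whether $A(ux')=A(wy')$, and in the only stubborn subcase ($A(ux')=A(wy')=\{1\}$ and $1\in A(ux)$) performs a single swap of the colours on $ux'$ and $xx'$ to reduce to the earlier subcase. So the obstacle you anticipate is real but is resolved by a short explicit case analysis, not general recolouring machinery.
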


\begin{proof}
We shall prove the theorem by induction on $|V(G)|+|E(G)|$. So we assume that $G$ is 2-connected and thus $\delta(G)\geq 2$. Set $U=\{1,\cdots,M+1\}$. In the next, we complete the proof by verifying the following four claims; they imply a contradiction to Lemma \ref{struc}.\vspace{1mm}

\noindent \emph{Claim $1$. If $uv\in E(G)$ and $d(u)=2$, then $d(v)\geq M$.}\vspace{1mm}

Suppose, to the contrary, that $d(v)\leq M-1$. Consider the graph $G'=G-uv$. By induction, $G'$ has an $(M+1)$-total coloring $c$. Denote the other neighbor of $u$ by $w$. Now erase the color on $u$ and color the edge $uv$ with $c(uv)\in A(uv):=U\setminus (\{c(v),c(uw)\} \cup \{c(vx)~|~vx\in E(G')\})$. This is possible since $|A(uv)|\geq M+1-(2+M-2)=1$. Denote the extended coloring at this stage still by $c$. Then at last we color $u$ with $c(u)\in U\setminus \{c(v),c(w),c(uv),c(uw)\}$, which is also possible since $|U|\geq 6$.\vspace{1mm}

\noindent \emph{Claim $2$. $G$ does not contain a $4$-cycle $uxvy$ such that $d(u)=d(v)=2$.}\vspace{1mm}

Otherwise, $G'=G-\{u,v\}$ admits an $(M+1)$-total coloring by induction and every edge of the 4-cycle has at least two available colors since it is incident with at most $\Delta-1$ colored elements. This implies that one can extend the coloring of $G'$ to the four edges $ux,vx,uy$ and $vy$ since every 4-cycle is 2-edge-choosable. At last, the two vertices $u$ and $v$ can be easily colored since they are both of degree two.\vspace{1mm}

\noindent \emph{Claim $3$. $G$ does not contain a $4$-cycle $uxvy$ such that $d(u)=d(v)=3$ and $uv\in E(G)$.}\vspace{1mm}

Suppose, to the contrary, that $G$ contains such a 4-cycle. We consider the graph $G'=G-uv$ that has an $(M+1)$-total coloring $c$ by induction. One can find that the only obstacle of extending $c$ to $uv$ is the case when $M=5$ and $U=\{c(u),c(v),c(ux),c(uy),c(vx),c(vy)\}$. Without loss of generality, let $c(ux)=1,c(uy)=2,c(vx)=3,c(vy)=4,c(u)=5$ and $c(v)=6$. If $c(x)\neq 4$, then we recolor $u$ by 4 (note that $c(y)\neq 4$) and then color $uv$ by 5. So we assume $c(x)=4$. Similarly we can prove $c(y)=3$. Therefore, we can recolor $v$ by 1 and then color $uv$ by 6, a contradiction.\vspace{1mm}

\noindent \emph{Claim $4$. $G$ does not contain a $7$-path $x'uxvywy'$ such that $d(u)=d(v)=d(w)=2$, $d(x)=d(y)=5$ and $xx', yy', xy, x'y, xy'\in E(G)$.}\vspace{1mm}

Suppose, to the contrary, that $G$ contains such a 7-path. We consider the graph $G'=G-\{u,v,w\}$, which admits an $(M+1)$-total coloring $c$ by induction. Denote by $A(e)$ the set of available colors to properly color an edge $e\in \{ux,vx,vy,wy,ux',wy'\}$. It is easy to see that $|A(ux')|=|A(wy')|=1$ and $|A(ux)|=|A(vx)|=|A(vy)|=|A(wy')|=2$, moreover, $A(ux)=A(vx)$ and $A(vy)=A(wy)$. If $A(ux')\neq A(wy')$, without loss of generality assume that $A(ux')=\{1\}$ and $A(wy')=\{2\}$, then color $ux'$ with 1 and $wy'$ with $2$. If $1\in A(ux)$, then color $ux$ with $c(ux)\in A(ux)\setminus \{1\}$ and $vx$ with 1. If $1\not\in A(ux)$, then color $vx$ with $c(vx)\in A(vx)\setminus \{2\}$ and $ux$ with $c(ux)\in A(ux)\setminus \{c(vx)\}$. In each case we have $c(vx)\neq c(wy')$. Thus we can color $vy$ with $c(vy)\in A(vy)\setminus \{c(vx)\}$ and $wy$ with $c(wy)\in A(wy)\setminus \{c(wy')\}$ such that $c(vy)\neq c(wy)$. At this stage, the three vertices $u,v$ and $w$ can be easily colored since they are all of degree two. So we assume that $A(ux')=A(wy')=\{1\}$. Now we firstly color $ux'$ and $wy'$ by 1. If $1\not\in A(ux)$, then color $vx$ with $c(vx)\in A(vx)$ and $ux$ with $c(ux)\in A(ux)\setminus \{c(vx)\}$. The current extended coloring satisfies that $c(vx)\neq c(wy')$. Therefore, we can color the remaining elements similarly as before. So we assume that $1\in A(ux)$. This implies that $1\not\in \{c(x),c(xy),c(xx'),c(xy')\}$ and thus we can exchange the colors on $ux'$ and $xx'$. By doing so we obtain a new coloring satisfying $c(ux')\neq c(wy')$ and therefore we can extend this partial coloring to $G$ by a same argument as above.
\end{proof}

\begin{cor}\label{main}
Every pseudo-outerplanar graph with maximum degree $\Delta\geq 5$ is $(\Delta+1)$-total colorable.
\end{cor}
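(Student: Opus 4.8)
The plan is to derive the corollary directly from the Theorem by specializing the free parameter $M$. Given a pseudo-outerplanar graph $G$ with maximum degree $\Delta\geq 5$, I would simply take $M=\Delta$. Then both hypotheses of the Theorem are met trivially: the condition $\Delta\leq M$ holds with equality, and $M=\Delta\geq 5$ supplies the requirement $M\geq 5$. The Theorem therefore yields the upper bound $\chi''(G)\leq M+1=\Delta+1$.

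It remains to match this with the lower bound. As recalled in the introduction, every graph satisfies $\chi''(G)\geq \Delta(G)+1$, since a vertex of maximum degree together with its incident edges forms a collection of $\Delta+1$ pairwise adjacent-or-incident elements, all of which must receive distinct colors. Combining the two inequalities gives $\chi''(G)=\Delta+1$; in particular $G$ admits a $(\Delta+1)$-total coloring, which is precisely the assertion of the corollary.

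There is essentially no obstacle to overcome here: the entire substance has already been carried out in the inductive proof of the Theorem, and the corollary is just the special case $M=\Delta$ recorded for emphasis, combined with the elementary lower bound. The only point deserving a moment's attention is to confirm that the hypothesis $\Delta\geq 5$ is exactly what legitimizes the choice $M=\Delta\geq 5$; this is why the statement is restricted to $\Delta\geq 5$ rather than to all $\Delta$.
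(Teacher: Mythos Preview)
Your argument is correct and matches the paper's intent exactly: the corollary is stated there without a separate proof, being the immediate specialization $M=\Delta$ of the Theorem together with the trivial lower bound $\chi''(G)\geq\Delta+1$. There is nothing to add or change.
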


Note that every graph with maximum degree $\Delta\leq 5$ is $(\Delta+2)$-total colorable (see \cite{Kostochka.1996} and Chapter 4 of \cite{yap}). So we also have the following corollary.

\begin{cor}
The total coloring conjecture holds for all pseudo-outerplanar graphs.
\end{cor}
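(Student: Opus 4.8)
The plan is to deduce the statement from the two results already assembled, by splitting on the size of the maximum degree $\Delta$ of the given pseudo-outerplanar graph $G$. The key observation that makes this clean is that the total coloring conjecture demands only $\chi''(G)\leq\Delta+2$, which is one unit weaker than the $\Delta+1$ bound proved above. Consequently no new coloring argument is required: a dichotomy on $\Delta$ that invokes Corollary~\ref{main} on one side and a cited small-degree result on the other should close the problem completely.

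First I would dispose of the large-degree regime. If $\Delta\geq 5$, then Corollary~\ref{main} applies directly to $G$ and gives $\chi''(G)\leq\Delta+1\leq\Delta+2$, so the conjecture holds. This is where all the genuine content lives: Corollary~\ref{main} rests on the Theorem, whose proof proceeds by induction on $|V(G)|+|E(G)|$ and combines the structural Lemma~\ref{struc} with the four reducibility claims. I would therefore treat this case as essentially already done.

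Next I would handle the small-degree regime $\Delta\leq 5$, where the sharper $\Delta+1$ bound is genuinely unavailable: for instance $K_4$ is pseudo-outerplanar with $\Delta=3$ yet $\chi''(K_4)=5=\Delta+2$. Here one cannot simply reuse the Theorem, which for such small $\Delta$ can only be applied with $M=5$ and hence returns $\chi''(G)\leq 6$, too weak to confirm the conjecture once $\Delta\leq 3$. Instead I would invoke the external fact recorded in the remark preceding the statement, namely that every graph of maximum degree at most $5$ is $(\Delta+2)$-total colorable (Kostochka, and Chapter~4 of Yap); applying it to $G$ yields $\chi''(G)\leq\Delta+2$ at once. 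Since every graph satisfies either $\Delta\geq 5$ or $\Delta\leq 5$, the two cases are exhaustive (overlapping harmlessly at $\Delta=5$), and together they give $\chi''(G)\leq\Delta+2$ for all pseudo-outerplanar $G$. The nearest thing to an obstacle is purely bookkeeping: one must check that the case split is exhaustive and that the cited small-degree theorem carries no hypothesis beyond $\Delta\leq 5$, both of which are immediate, so the real difficulty remains upstream in the Theorem rather than in this corollary.
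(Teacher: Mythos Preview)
Your proposal is correct and follows exactly the paper's own argument: split on $\Delta\geq 5$ versus $\Delta\leq 5$, invoke Corollary~\ref{main} in the former case, and cite the Kostochka/Yap result that every graph with $\Delta\leq 5$ is $(\Delta+2)$-total colorable in the latter. The paper presents this as a one-line remark preceding the corollary rather than a formal proof, but the content is identical.
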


The graph (a) in Figure \ref{fig} is a pseudo-outerplanar graph, since it has a pseudo-outerplanar drawing as (b). One can easy to check that it is a graph with maximum degree 3 and total chromatic number 5. Thus the upper bound for $\Delta$ in Corollary \ref{main}, although probably not the best possible, cannot be less than 4.
\begin{figure}
\begin{center}
  % Requires \usepackage{graphicx}
  \includegraphics[width=7.5cm,height=2.8cm]{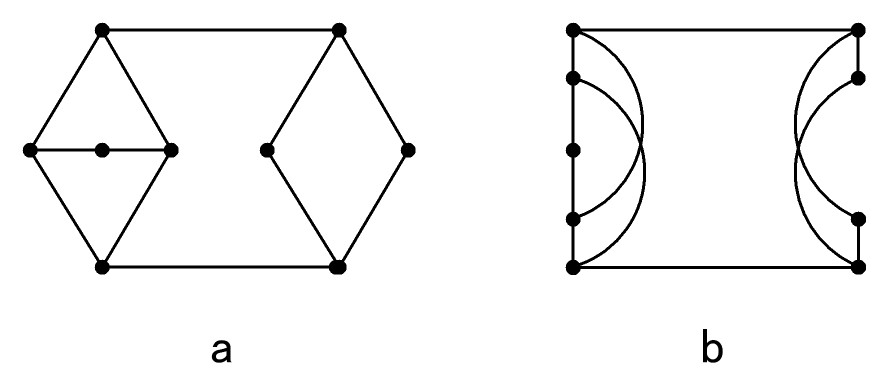}\\
  \caption{A pseudo-outerplanar graph $G$ with $\Delta(G)=3$ and $\chi''(G)=5$}\label{fig}
\end{center}
\end{figure}
At last, we leave the following open problem to end this paper.

\begin{prob}
To determine the total chromatic number of pseudo-outerplanar graphs with maximum degree four.
\end{prob}

% optional appendix
%\appendix

\end{document}